\newcommand{\Ps}{\mathbf{P}}
\newcommand{\C}{\mathbf{C}}
\newcommand{\cL}{\mathcal{L}}
\newtheorem{theorem}{Theorem}[section]
\newtheorem{proposition}[theorem]{Proposition}
\theoremstyle{remark}
\newtheorem{example}[theorem]{Example}
\newtheorem{remark}[theorem]{Remark}
\DeclareMathOperator{\Aut}{Aut}
\DeclareMathOperator{\rank}{rank}
\DeclareMathOperator{\CH}{CH}
\DeclareMathOperator{\sing}{sing}
\DeclareMathOperator{\length}{length}
\title{Nodal surfaces with obstructed deformations}
\author{Remke Kloosterman}
\address{Universit\`a degli Studi di Padova,
Dipartimento di Matematica,
Via Trieste 63,
35121 Padova, Italy}
\thanks{The author would like to thank Arnaud Beauville for pointing out the paper \cite{DimNodalHS}. The author would  thank the referee for many valuable suggestions to improve the exposition.}
\begin{document}
\begin{abstract}
In this text we show that the deformation space of a nodal surface $X$ of degree $d$ is smooth and of the expected dimension if $d\leq 7$ or $d\geq 8$ and $X$ has at most $4d-5$ nodes. (The case $d\leq 7$ was previously covered by Alexandru Dimca using different techniques.)

For $d\geq 8$ we give explicit examples of nodal surfaces with $4d-4$ nodes, for which the tangent space to the deformation space has larger dimension than expected.
\end{abstract}
\maketitle

\section{Introduction}
In his recent PhD thesis Yan Zhao \cite{ZhaoThesis} studied the deformation theory of nodal surfaces. He showed that for particular families of sextic surfaces the dimension of the deformation space equals the expected dimension. 
There are various deformation spaces for singular varieties (i.e., either preserving the analytic or the topological types of the singularities). However, for nodal varieties these spaces coincide and we call them ``the deformation space".

Zhao's results  motivated us to prove the  following result.

\begin{theorem} Let $X\subset \Ps^3$ be a nodal surface of degree $d$. If $d\leq 7$ or $d\geq 8$ and $X$ has at most $4(d-1)-1$ nodes then the deformation space of $X$ is smooth and  has the expected dimension.
\end{theorem}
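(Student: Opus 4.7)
The plan is to translate the smoothness and expected-dimension conditions for the deformation space of $X$ into concrete cohomological vanishings, and then verify these using the bound $|\Sigma| \le 4(d-1)-1$ on the nodal locus $\Sigma := \sing(X)$.

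First I would set up the deformation theory. Since every node is a hypersurface double point, the local sheaves $\mathcal{E}xt^i(\Omega^1_X, \mathcal{O}_X)$ vanish for $i \ge 2$, and $\mathcal{E}xt^1(\Omega^1_X, \mathcal{O}_X)$ is a skyscraper supported on $\Sigma$. A local-to-global spectral sequence computation then identifies the obstruction space $T^2(X)$ essentially with $H^2(X, T_X)$, while the tangent space $T^1(X)$ becomes a quotient of $H^0(\mathcal{I}_\Sigma(d))$ by the Jacobian ideal of $F$, where $X = V(F)$.

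Second, I would compute $T^2(X)$ explicitly. Using the Euler sequence on $\Ps^3$ restricted to $X$ and the conormal sequence for $X \subset \Ps^3$ (passing to a resolution $\tilde X \to X$ to handle the singularities), Serre duality expresses $T^2(X)$ in terms of the cokernel of the evaluation map $H^0(\Ps^3, \mathcal{O}(2d-5)) \to \C^{|\Sigma|}$, where the degree $2d-5$ arises from adjunction $\omega_X = \mathcal{O}_X(d-4)$ together with a normal-bundle twist. Following Dimca's framework \cite{DimNodalHS}, the condition $T^2(X)=0$ then amounts to $\Sigma$ imposing independent conditions on polynomials of degree $2d-5$; a parallel argument at degree $d$ gives the expected-dimension assertion.

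Third, I would verify this independence under the numerical hypothesis. Assuming a dependency among the evaluation functionals at the nodes, one extracts a subset $\Sigma' \subset \Sigma$ exhibiting a Cayley--Bacharach-type relation; combined with $\Sigma' \subset X$ of degree $d$, this should force $\Sigma'$ onto an auxiliary curve or surface of unexpectedly low degree, and a B\'ezout-type count on $X$ should bound $|\Sigma'|$ from below by $4(d-1)$, contradicting $|\Sigma| \le 4(d-1)-1$. The main obstacle is pinning down this sharp lower bound, and for this the counterexamples with exactly $4(d-1)$ nodes promised in the abstract for $d \ge 8$ serve as a guide to the configurations that must be excluded. The case $d \le 7$ reduces to Dimca's earlier result, since for small $d$ the defect phenomenon in degree $2d-5$ cannot materialize on this many nodes.
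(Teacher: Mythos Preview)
Your Steps 1--2 are more elaborate than what is actually needed: the paper (citing Burns--Wahl and Zhao) reduces both smoothness and expected dimension to the single condition that $I(\Sigma)$ has no defect in degree $d$, i.e.\ $h_I(d)=\#\Sigma$. A separate computation of $T^2$ via degree $2d-5$ is unnecessary, and in any event would follow from the degree-$d$ statement, since the defect of a zero-dimensional ideal is nonincreasing and $2d-5\ge d$ for $d\ge 5$.

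The genuine gap is Step 3. You yourself concede that ``the main obstacle is pinning down this sharp lower bound'', and indeed the Cayley--Bacharach/B\'ezout gesture is not an argument: you do not say which auxiliary curve or surface must arise, nor how any intersection count would produce exactly $4(d-1)$. The paper's mechanism is quite different and is where all the content lies. One restricts to a general hyperplane, setting $I_H=(I,\ell)$, and then enlarges $I_H$ to an Artinian Gorenstein ideal $J$ with $h_J(d+1)=1$. The Gorenstein duality $h_J(k)=h_J(d+1-k)$, combined with the Macaulay--Gotzmann bound applied to the base-point-free system $J_{d-1}$ (base-point-freeness coming from the partial derivatives of the equation of $X$), forces $h_J(k)\ge 4$ for $3\le k\le d-2$ and $h_J(k)\ge k+1$ for $k\le 2$. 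Summing gives $\#\Sigma\ge \sum_k h_{I_H}(k)\ge \sum_k h_J(k)\ge 4d-4$. So the number $4(d-1)$ emerges from Hilbert-function combinatorics, not from any B\'ezout-type count, and nothing in your outline points toward Macaulay--Gotzmann or the Gorenstein construction. For $d\le 7$ the paper also does not merely cite Dimca: it gives an independent proof using a degree-$s$ base change $\varphi:\Ps^3\to\Ps^3$ together with the global bound on the exponent of $(t+1)$ in the Alexander polynomial of $\varphi^{-1}(X)$, to deduce $h_I(k)=\#\Sigma$ for all $k>\tfrac{3}{2}d-4$.
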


The case $d\leq 7$ was proven by Dimca \cite{DimNodalHS} using different techniques.
The above bound is sharp:
\begin{theorem} Suppose $X\subset \Ps^3$ is a nodal surface of degree $d\geq 8$ with $4(d-1)$ nodes. Then the tangent space to the deformation space does not have the expected dimension if and only if the locus of the nodes form a complete intersection of multidegree $(1,4,d-1)$.
\end{theorem}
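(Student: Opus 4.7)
\emph{Reformulation.} The plan begins by reformulating the assertion as a numerical condition on $\Sigma$. Standard deformation theory identifies the tangent space to the deformation space with $H^0(\mathcal{I}_\Sigma(d))/(J_f)_d$, where $J_f$ is the Jacobian ideal of a defining equation of $X$. This tangent space has the expected dimension precisely when $\Sigma$ imposes independent conditions on degree $d$ hypersurfaces, i.e.\
$$h^0(\mathcal{I}_\Sigma(d)) = \binom{d+3}{3} - 4(d-1).$$

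\emph{The easy direction.} Suppose $\Sigma$ is a complete intersection of type $(1,4,d-1)$. Twisting its Koszul resolution by $\mathcal{O}(d)$ and taking cohomology (all intermediate higher cohomology vanishes in the relevant range) gives
$$h^0(\mathcal{I}_\Sigma(d)) = \binom{d+2}{3} + \binom{d-1}{3} + 4 - \binom{d-2}{3} - 1,$$
which simplifies to $\binom{d+3}{3} - 4(d-1) + 1$, so $\Sigma$ fails to impose independent conditions by exactly one.

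\emph{Bounding the defect from above.} For the converse, suppose $\delta := h^0(\mathcal{I}_\Sigma(d)) - \binom{d+3}{3} + 4(d-1)$ is positive. For any $p \in \Sigma$, the configuration $\Sigma \setminus \{p\}$ consists of $4d-5$ nodes on the degree $d$ surface $X$; the argument underlying Theorem~1 shows that any such configuration imposes independent conditions on degree $d$ hypersurfaces, so
$$h^0(\mathcal{I}_{\Sigma\setminus\{p\}}(d)) = \binom{d+3}{3} - (4d-5).$$
The natural inclusion $H^0(\mathcal{I}_\Sigma(d)) \subset H^0(\mathcal{I}_{\Sigma\setminus\{p\}}(d))$ forces $\delta \leq 1$, hence $\delta = 1$. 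Equivalently, $\Sigma$ satisfies a strong Cayley--Bacharach property: any degree $d$ surface containing $4d-5$ of the nodes automatically contains all of $\Sigma$.

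\emph{From Cayley--Bacharach to complete intersection.} The remaining task, and main obstacle, is to deduce from this Cayley--Bacharach property that $\Sigma$ is a complete intersection of type $(1,4,d-1)$. The plan is to first show that $\Sigma$ is contained in a plane $H$ --- this is the hardest step, since the abstract one-dimensional Cayley--Bacharach relation must be upgraded to the existence of a genuine linear form vanishing on $\Sigma$; the likely route is a Castelnuovo--Mumford regularity argument that bounds the Hilbert function of $\Sigma$ and forces $h^0(\mathcal{I}_\Sigma(1)) \geq 1$. Granting planarity, restrict to $H \cong \Ps^2$: the nodes lie on the degree $d$ plane curve $X \cap H$ and inherit a Cayley--Bacharach property in $\Ps^2$. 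The numerical identity $ab = 4(d-1)$, $a+b-3 = d$ has the unique positive solution $(a,b) = (4, d-1)$, and the classical plane Cayley--Bacharach theorem (via the Hilbert function characterization of plane complete intersections) then produces a quartic and a $(d-1)$-curve in $H$ cutting $\Sigma$ out transversally. Lifting these to $\Ps^3$ yields the CI$(1,4,d-1)$ structure claimed.
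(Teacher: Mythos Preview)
Your reduction to $\delta = 1$ is correct and elegant (note only that Theorem~1 as stated concerns the full nodal locus of a surface; you must observe that its proof uses merely that $I(\Sigma)_{d-1}$ has finite base locus --- a property inherited by any subset of $\Sigma$ --- so it does apply to $\Sigma \setminus \{p\}$). However, the argument is incomplete at the step you yourself flag as the hardest: planarity is not established, only gestured at via an unspecified ``Castelnuovo--Mumford regularity argument''. The subsequent step is also not as classical as you suggest: the implication ``a set of $ab$ points in $\Ps^2$ satisfying Cayley--Bacharach in degree $a+b-3$ is a complete intersection of type $(a,b)$'' is the \emph{converse} of the classical Cayley--Bacharach theorem and requires a Davis--Geramita--Orecchia type argument or a direct Hilbert-function analysis; it is not something one can simply invoke.

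The paper's route bypasses both gaps at once and is quite different from yours. Rather than first isolating $\delta = 1$ and then hunting for a linear form, it takes a general hyperplane section $I_H = (I,\ell)$, builds an Artinian Gorenstein quotient $S/J$ with $h_J(d+1)=1$, and uses Macaulay--Gotzmann (Theorem~\ref{thmHilb}) together with base-point-freeness of $J_{d-1}$ to force $h_J(k)\geq 4$ for $3\leq k\leq d-2$, whence $\#\Sigma \geq \sum_k h_J(k) \geq 4d-4$. In the equality case this pins down the entire Hilbert function of $I$: one finds $h_I(1)=3$, so $I$ has a generator in degree~$1$ (this is your missing planarity), then comparison with the Hilbert function of $(f_1)$ produces a second generator in degree~$4$, and a third in degree~$d-1$; a length count shows these three cut out $\Sigma$ as a complete intersection. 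In other words, the same Macaulay--Gotzmann machinery you already rely on for the $4d-5$ bound, pushed one step further, delivers the complete-intersection structure directly, with no appeal to a Cayley--Bacharach converse.
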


The proof relies on the fact that the difference between the expected dimension and the actual dimension of the tangent space at a surface $X$ equals the defect of the linear system of degree $d$ polynomials through the nodes of $X$. Then we use some ideas from \cite[Proof of Theorem 4.1]{KloNod} to bound the Hilbert function of an ideal $I$, which has defect in certain degree, and has finite base locus in a lower degree.

As an application we discuss surfaces of the form
\[ f_1f_2+f_3^2f_4\]
with $\deg(f_1)=1$ and $4\leq \deg(f_3) \leq d/2$. (See Example~\ref{exa}.)
We show that in the extreme case $\deg(f_3)=4$ these surfaces form the singular locus of the deformation space, whereas in the other extreme case $\deg(f_3)=d/2$ we show that the deformation space is smooth but not of the expected dimension.

Deformation theory of singular hypersurfaces have been studied extensively. The starting point for deformation of nodal surfaces is the paper \cite{BurnsWahl}. For our applications, the slightly different presentation in \cite[Chapter 3]{ZhaoThesis} is more suitable.

\section{Preliminaries}
Let $S=\C[x_0,\dots,x_n]$. For a homogeneous ideal $I\subset S$ denote with $h_I(k)=\dim S/I_k$, the Hilbert function of $I$. 
Let 
\[ 0 \to F_s \to F_{s-1}\to \dots \to F_0  \to S/I \to 0 \]
be a free resolution of $S/I$, where $F_i =\oplus S(-j)^{\beta_{i,j}}$. If the resolution is minimal then the $\beta_{i,j}$ are called the Betti numbers of $S/I$.
Under the weaker assumption that each $F_i$ is finitely generated we set $B_j =\sum_{i=0}^s (-1)^i \beta_{i,j}$. Then 
\[ h_I(k)=\sum_{j=0}^k B_j \binom{n+k-j}{n}\]
This formula allows us to recover $B_k$ from $h_I(k)$ and the $B_j$ with $j<k$. In particular, the $B_j$ are independent of the resolution.
The Hilbert polynomial $p_I(x)$ of $I$ equals
\[ \sum_{j\geq 0} B_j  \frac{(n+x-j)(n+x-j+1)\dots (x-j+1)}{n!}.\]
The \emph{defect} of $I$ in degree $k$ is the difference $p_I(k)-h_I(k)$ and is denoted by $\delta_k(I)$. This equals
\[ \sum_{j\geq k+n+1} B_j  \frac{(n+k-j)(n+k-j+1)\dots (k-j+1)}{n!}\]
which in turn equals
\[
(-1)^n \sum_{j\geq k+n+1} B_j \binom{j-k-1}{n}.\]
In particular, if $I$ has defect in degree $k$ then for some $j\geq k+n+1$ we have $B_j\neq 0$.

\begin{proposition}\label{prpBnd} Let $\Sigma \subset \Ps^n$ be a closed subscheme of dimension 0. Let $I$ be the ideal of $\Sigma$. Suppose that $h_I(k-n)\neq \length(\Sigma)$. Let $t$ be a positive integer and let $f_0,\dots,f_n \in S_t$ be  polynomials without a common zero. Let $\varphi: \Ps^n \to \Ps^n$ given by $(f_0,\dots,f_n)$ be the associated morphism. Let $I_t$ be the ideal of $\varphi^{-1}(\Sigma)$. Then
\[ \length(\varphi^{-1}(\Sigma))-h_{I_t}(tk-n-1)\geq \binom{n+t}{n}-n.\]
\end{proposition}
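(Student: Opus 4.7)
The plan is to compare $h_{I_t}$ with $h_I$ by realizing $S/I_t$ as a graded module over the subring $R:=\C[f_0,\ldots,f_n]\subset S$. Since the $f_j\in S_t$ have no common zero in $\Ps^n$, they cut out only the origin in $\mathbb{A}^{n+1}$ and hence form a regular sequence in $S$; therefore $R$ is a graded polynomial ring on $n+1$ generators of $S$-degree $t$, and the standard Koszul/flatness argument shows that $S$ is a free graded $R$-module. Writing $S=\bigoplus_\alpha R(-d_\alpha)$ and matching the Hilbert series of $S\otimes_R\C=S/(f_0,\ldots,f_n)S$ with the Koszul computation gives
\[\sum_\alpha q^{d_\alpha}=\Bigl(\tfrac{1-q^t}{1-q}\Bigr)^{n+1}=(1+q+\cdots+q^{t-1})^{n+1}.\]

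Identifying the ideal of $\Sigma$ with an ideal of $R$ via $y_j\mapsto f_j$ (which I still call $I$), the subscheme $\varphi^{-1}(\Sigma)$ has coordinate ring $S\otimes_R R/I=\bigoplus_\alpha(R/I)(-d_\alpha)$. Because $R_e=0$ whenever $t\nmid e$, this yields
\[h_{I_t}(m)=\sum_{\alpha:\,t\mid m-d_\alpha}h_I\bigl((m-d_\alpha)/t\bigr),\]
and plugging primitive $t$-th roots of unity into $\sum q^{d_\alpha}$ shows that each residue class mod $t$ contains exactly $t^n$ of the $d_\alpha$, whence $\length(\varphi^{-1}(\Sigma))=t^n\,\length(\Sigma)$. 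Setting $m=tk-n-1$, only $d_\alpha\equiv-(n+1)\pmod t$ contribute and the difference telescopes to
\[\length(\varphi^{-1}(\Sigma))-h_{I_t}(tk-n-1)=\sum_{d_\alpha\equiv-(n+1)\pmod t}\delta_{k-(d_\alpha+n+1)/t}(I).\]

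The hypothesis gives $\delta_{k-n}(I)\geq 1$, and for the zero-dimensional $\Sigma$ the function $e\mapsto\delta_e(I)$ is non-increasing, so every $d_\alpha$ with $d_\alpha\geq nt-n-1$ contributes at least $1$ to the sum. In the range $[0,(n+1)(t-1)]$ the only values of $d$ satisfying both $d\equiv-(n+1)\pmod t$ and $d\geq nt-n-1$ are the two endpoints $nt-n-1$ and $(n+1)(t-1)$. By the palindromic symmetry of $(1+q+\cdots+q^{t-1})^{n+1}$ combined with the expansion $(1-q^t)^{n+1}(1-q)^{-(n+1)}$, the upper endpoint has multiplicity $[q^0]=1$ and the lower endpoint has multiplicity $[q^t]=\binom{n+t}{n}-(n+1)$. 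Summing gives the bound $\binom{n+t}{n}-n$.

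I expect the principal obstacle to lie in Step 1---namely proving that $S$ is free over $R$ and pinning down the generating series $\sum q^{d_\alpha}$; once that is in hand, the remainder is Hilbert-function bookkeeping and a short identity in $(1-q)^{-(n+1)}$. A secondary point worth flagging is that the bound is essentially sharp at the level of this argument: both extreme generators must be retained and the correct $\delta$-index used for each, or the optimal constant $\binom{n+t}{n}-n$ is lost.
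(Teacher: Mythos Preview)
Your argument is correct. Both proofs rest on the same underlying fact---that the $f_i$ form a regular sequence, so $S$ is flat (in fact free) over $R=\C[f_0,\dots,f_n]$ and the pulled-back ideal $IS$ is already saturated---but they organise the Hilbert-function bookkeeping differently. The paper works with the alternating Betti invariants $B_j$: it first reduces (via monotonicity of the defect) to the extremal situation $B_j(I)=0$ for $j\geq k+2$, observes that pulling back a free resolution gives $B_{tj}(I_t)=B_j(I)$ and $B_j(I_t)=0$ for $t\nmid j$, and then reads off $\delta_{tk-n-1}(I_t)=(-1)^n\bigl(B_{k+1}(I)\binom{n+t}{n}+B_k(I)\bigr)$ and bounds it using the two positivity constraints $(-1)^nB_{k+1}\geq 1$ and $(-1)^n((n+1)B_{k+1}+B_k)\geq 1$. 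You instead exploit the graded free decomposition $S=\bigoplus_\alpha R(-d_\alpha)$ with $\sum q^{d_\alpha}=(1+q+\cdots+q^{t-1})^{n+1}$ to obtain the exact identity $\delta_{tk-n-1}(I_t)=\sum_{d_\alpha\equiv -(n+1)\ (t)}\delta_{k-(d_\alpha+n+1)/t}(I)$, and then lower-bound by keeping only the two top residues $d_\alpha=nt-n-1$ and $(n+1)(t-1)$. Your route avoids Betti numbers and the reduction step, and it produces a transparent closed formula for $\delta_{I_t}$ in terms of the $\delta_I$'s that could be pushed further; the paper's route is shorter once the $B_j$-formalism is in place and sidesteps the combinatorics of the generating polynomial. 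One small point you pass over (as does the paper) is why $IS$ coincides with the saturated ideal $I_t$; this follows because $S$ is free over $R$ and $R/I$ has positive depth, so $S\otimes_R R/I$ does too.
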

\begin{proof}
Let $X$ be a scheme and let $\ell$ be a linear form  such that $X \not \subset V(\ell)$. Then multiplication by $\ell$ yields an injective linear map $S/I(X)_k \to S/I(X)_{k+1}$. In particular, the Hilbert function of $I(X)$ is increasing. If $\dim X=0$ then the Hilbert polynomial is constant and therefore the defect is a decreasing function.

Therefore it suffices to prove the result for the highest possible value for $k$, i.e., we may assume that $h_I(j)=\length(\Sigma)$ for $j\geq k-n+1$. Let $B_i(I)$ be as defined above. Then $B_i(I)=0$ for $i\geq k+2$. By assumption, $\delta_{k-n}(I)$ and $\delta_{k-n-1}(I)$ are positive. Hence
\[ (-1)^n B_{k+1}(I) \mbox{ and }  (-1)^n(B_{k+1}(I) (n+1)+B_k(I))\]
are positive. 
The pull back of a free resolution of $S/I$ is a free resolution of $S/I_t$. In particular, $B_j(I_t)=0$ if $t\nmid j$ and $B_{tj}(I_t)=B_j(I)$. From this it follows that  $\delta_{tk-n}(I_t)$ equals
\[(-1)^ n\left(B_{t(k+1)}(I_t)\binom{n+t}{n}+B_{tk}(I_t)\right)=(-1)^n\left(B_{k+1}(I) \binom{n+t}{n}+B_k(I)\right).\] This is at least
\[ (-1)^n B_{k+1}(I) \left( \binom{n+t}{n}-n-1\right)+1\geq \binom{n+t}{n}-n.\]
\end{proof}

Macaulay \cite{Mac} described the possible Hilbert functions of homogeneous ideals. Gotzmann \cite{Gotz} described what happens in the extreme case. As a corollary we obtain the following result. (For the deduction see \cite[Section 2]{KloNod}.)
\begin{theorem}[Macaulay-Gotzmann] \label{thmHilb}  Let $I\subset S$ be an ideal. If $h_I(k)\leq k$ then $h_I(k+1)\leq h_I(k)$. If, moreover, $I_{k+1}$ is base point free then $h_I(k+1)<h_I(k)$ or $h_I(k)=0$.
\end{theorem}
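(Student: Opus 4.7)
The plan is to derive Theorem \ref{thmHilb} from Macaulay's bound on the growth of Hilbert functions, together with Gotzmann's persistence theorem.

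Set $c = h_I(k)$ and write $c$ in its Macaulay representation in base $k$,
\[ c = \binom{a_k}{k} + \binom{a_{k-1}}{k-1} + \cdots + \binom{a_j}{j},\quad a_k > a_{k-1} > \cdots > a_j \geq j \geq 1.\]
Macaulay's theorem then bounds $h_I(k+1)$ by
\[ c^{\langle k \rangle} := \binom{a_k+1}{k+1} + \binom{a_{k-1}+1}{k} + \cdots + \binom{a_j+1}{j+1}.\]
For the first assertion, I would observe that when $c \leq k$ the inequality $c < \binom{k+1}{k}$ forces $a_k = k$; iterating, the constraints $a_{i-1}<a_i$ and $a_i\geq i$ pin the representation down to the diagonal sum $\binom{k}{k}+\binom{k-1}{k-1}+\cdots+\binom{k-c+1}{k-c+1}$. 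Each corresponding summand of $c^{\langle k\rangle}$ also equals $1$, so $c^{\langle k\rangle} = c$ and hence $h_I(k+1) \leq h_I(k)$.

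For the second assertion, assume $h_I(k) > 0$ and $I_{k+1}$ base point free, and suppose for contradiction that $h_I(k+1) = c$. Then Macaulay's bound is attained in degree $k+1$, so Gotzmann's persistence theorem yields $h_I(m) = c$ for every $m \geq k$. On the other hand, base point freeness of $I_{k+1}$ means $V(I_{k+1})=\emptyset$ in $\Ps^n$, and since $I_{k+1} \subset I$ this gives $V(I) = \emptyset$. Hence $S/I$ is Artinian and $h_I(m) = 0$ for $m \gg 0$, which forces $c = 0$ and contradicts the hypothesis $h_I(k) > 0$.

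The substantive input is Gotzmann's persistence theorem; the remainder is a direct verification with Macaulay representations. The main point requiring care is aligning the indexing conventions when translating Gotzmann's statement into the strict-inequality assertion $h_I(k+1) < h_I(k)$, and checking that the extremal equality case of Macaulay in degree $k+1$ really triggers persistence once one knows the representation of $c$ is diagonal.
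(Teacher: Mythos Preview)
The paper does not prove Theorem~\ref{thmHilb} itself; it records the statement as a corollary of Macaulay's and Gotzmann's theorems and refers to \cite[Section~2]{KloNod} for the deduction. Your approach via the explicit Macaulay representation is exactly the natural one, and the first half---showing that $c\le k$ forces the diagonal representation $c=\sum_{i=0}^{c-1}\binom{k-i}{k-i}$ and hence $c^{\langle k\rangle}=c$---is correct and clean.

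There is, however, a real gap in the second half. Gotzmann's persistence theorem, in its standard form, applies to an ideal generated in degrees $\le k$; it does not assert $h_I(m)=c$ for all $m\ge k$ when $I$ may acquire further generators. For instance, one can easily have $h_I(k)=h_I(k+1)=1$ yet $h_I(k+2)=0$, simply by adding a new generator in degree $k+2$. The repair is short. From $S_1 I_k\subset I_{k+1}$ and Macaulay's bound applied to $S_1 I_k$ one gets $\dim S_{k+1}/S_1 I_k\le c^{\langle k\rangle}=c=h_I(k+1)$, whence $S_1 I_k=I_{k+1}$. Now apply persistence to $J:=(I_k)$: since $J$ is generated in degree $k$ and $h_J(k+1)=h_J(k)^{\langle k\rangle}$, Gotzmann gives $h_J(m)=c$ for all $m\ge k$. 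But $V(J)=V(I_k)=V(S_1 I_k)=V(I_{k+1})=\emptyset$, so $S/J$ is Artinian and $h_J(m)=0$ for $m\gg 0$, contradicting $c>0$. With this adjustment your argument is complete; note that the contradiction now runs through $J$ rather than $I$, so your appeal to $V(I)=\emptyset$ is not the relevant step.
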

\begin{remark}
This result is a key ingredient in various proofs of the explicit Noether-Lefschetz Theorem (e.g. \cite{GreenF})  and is also used in the author's proof \cite{KloNod} for the fact that a nodal hypersurfaces of degree $d$ in $\Ps^4$ with less than $(d-1)^2$ nodes is factorial.
\end{remark}

In the following we use the Alexander polynomial of hypersurfaces with isolated singularities. Suppose $X=V(f)\subset \Ps^n$ is a hypersurface with isolated singularities. Then $f$ defines an affine hypersurface in $\mathbf{A}^{n+1}$ with a one-dimensional singular locus. Let $F=V(f-1)\subset \mathbf{A}^{n+1}$ be the affine Milnor fiber. 
Fix a primitive $d$-th root of unity $\zeta$. Let $T$ be the map multiplying each $x_i$ with $\zeta$. Then $T$ acts on $F$. The Alexander polynomial $\Delta_X(t)$ of $X$ is the characteristic polynomial of $T^*$ acting on $H^{n-1}(F)$.
\begin{proposition}\label{prpGlobalDiv}  Let $X\subset \Ps^n$ is a hypersurface with isolated singularities. Then the only zeros of the Alexander polynomial are $d$-th roots of unity. 
Let $\alpha$ be  a root of unity different from one. Then one can bound the exponent of $(t-\alpha)$ in $\Delta_X$ by
\[  (-1)^{n+1}\frac{1-(-1)^n(d-1)^{n}}{d}.\]
\end{proposition}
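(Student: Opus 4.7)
The first assertion is immediate. Since $\zeta^d=1$, the map $T$ satisfies $T^d=\mathrm{id}$ on $\mathbf{A}^{n+1}$, hence also on $F$. Therefore $(T^*)^d=\mathrm{id}$ on $H^{n-1}(F)$; the minimal polynomial of $T^*$ divides $t^d-1$, and every root of $\Delta_X$ is a $d$-th root of unity.

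For the quantitative bound I would use a hyperplane-section argument. Choose a linear form $\ell$ so that the hyperplane $H=V(\ell)\subset\Ps^n$ is generic; then $X\cap H\subset H\cong\Ps^{n-1}$ is a smooth hypersurface of degree $d$ (we can avoid the finitely many singular points of $X$). The linear subspace $V(\ell)\subset\mathbf{A}^{n+1}$ is $\mu_d$-stable, so the inclusion $F_H:=F\cap V(\ell)\hookrightarrow F$ is $\mu_d$-equivariant. The variety $F$ is smooth (Euler's relation: $df=0$ forces $f=0$, ruling out critical points on $F=\{f=1\}$), and for generic $\ell$ the intersection $F_H$ is a smooth transverse hyperplane section of $F$. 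By the affine Lefschetz hyperplane theorem (Andreotti--Frankel), $F$ has the homotopy type of an $n$-dimensional CW complex obtained from $F_H$ by attaching cells of dimension $\ge n$; hence $H^{n-1}(F)\hookrightarrow H^{n-1}(F_H)$, equivariantly. Restricting to the $\alpha$-eigenspace for $\alpha\ne 1$ gives
\[ \dim H^{n-1}(F)^{\alpha}\le \dim H^{n-1}(F_H)^{\alpha}. \]

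Since $X\cap H$ is smooth, $F_H=\{f|_{V(\ell)}=1\}$ is the Milnor fibre of the isolated hypersurface singularity at the origin of $\{f|_{V(\ell)}=0\}\subset\mathbf{A}^n$. By Milnor's theorem, $F_H$ is homotopy equivalent to a wedge of $(d-1)^n$ spheres of dimension $n-1$, so $\dim H^{n-1}(F_H)=(d-1)^n$. The monodromy eigenspace decomposition is computed explicitly via Brieskorn's basis in the Fermat case, and extends by rigidity in smooth families to any smooth $f|_{V(\ell)}$: the eigenvalue $\zeta^{r}$ appears with multiplicity equal to the number of tuples $(a_1,\dots,a_n)$ with $1\le a_i\le d-1$ and $\sum a_i\equiv r\pmod d$, i.e.\ the sum of the coefficients of $(t+t^2+\cdots+t^{d-1})^n$ in degrees $\equiv r\pmod d$. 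Evaluating this generating function at a $d$-th root of unity $\zeta\neq 1$ gives $(-1)^n$, while at $\zeta=1$ it gives $(d-1)^n$. Character orthogonality then yields, for $r\not\equiv 0\pmod d$,
\[ \dim H^{n-1}(F_H)^{\zeta^{r}}=\frac{(d-1)^{n}+(-1)^{n+1}}{d}=(-1)^{n+1}\frac{1-(-1)^{n}(d-1)^{n}}{d}, \]
which is the claimed bound.

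The main obstacle is the Andreotti--Frankel-type Lefschetz injection in the equivariant setting, combined with the Brieskorn--Pham computation of the monodromy eigenspaces in the smooth cone case. Both are classical, so the real work is in the packaging: checking that $H$ can be chosen generically to guarantee both smoothness of $X\cap H$ and transversality of $F_H$ in $F$, and that $\mu_d$-equivariance is preserved throughout.
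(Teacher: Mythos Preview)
Your argument is correct and follows the same route as the paper: pass to a generic hyperplane section $X\cap H$, where the hypersurface is smooth, and bound the exponent of $(t-\alpha)$ via the monodromy on the Milnor fibre of $X\cap H$. The paper simply quotes Dimca's book for both the divisibility step and the explicit characteristic polynomial $(t-1)^{(-1)^n}(t^d-1)^{\chi}$ in the smooth case, whereas you unpack these two ingredients directly via the affine Lefschetz injection and the Brieskorn--Pham eigenspace count.
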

\begin{proof}
 Let $H$ be a general hyperplane then it follows from \cite[Theorem 4.1.24]{Dim} that $\Delta_X(t)$ divides the characteristic polynomial of the monodromy of the Milnor fiber $F'$ associated with $X\cap H$.
From \cite[Example 4.1.23]{Dim} it follows that the latter polynomial equals 
$(t-1)^{(-1)^{n}}(t^d-1)^{\chi}$, with $\chi=(-1)^{n+1}\frac{1-(-1)^n(d-1)^{n}}{d}$.
 \end{proof}

In the surface case ($n=3$) we find that the exponent is at most $d^2-3d+3$. The following result explains how to calculate the Alexander polynomial of a nodal hypersurface:

\begin{theorem}[{Dimca, \cite[Theorem 6.4.5]{Dim}}]\label{thmAlex}
Let $X\subset \Ps^n$ be a nodal hypersurface of degree $d$. Let $\Sigma \subset X$ be the locus of the nodes. Then the Alexander polynomial $\Delta_X(t)$ of $X$ equals 
\[ (t-(-1)^{n})^\delta\]
with $\delta=0$ if $dn$ is odd and $\delta$ equals the defect of $I(\Sigma)$ in degree $\frac{nd}{2} -n-1$ if $dn$ is even.
\end{theorem}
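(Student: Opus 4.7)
The plan is to combine the general Hodge-theoretic description of monodromy eigenspaces on the Milnor fiber with the especially simple local structure of nodes. Since $\Delta_X(t)$ is by definition the characteristic polynomial of $T^*$ on $H^{n-1}(F)$, the task reduces to determining which eigenvalues occur and with what multiplicities.

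For the possible eigenvalues, I would reason as follows. By Proposition~\ref{prpGlobalDiv}, every root of $\Delta_X(t)$ is a $d$-th root of unity. On the other hand, the local Milnor fiber at an $A_1$ singularity is homotopy equivalent to $S^{n-1}$, and its local monodromy acts by multiplication by $(-1)^n$ on $\widetilde H^{n-1}$. Using the exact sequence relating the nearby and vanishing cycle sheaves of $f$ (equivalently, a Wang-type sequence comparing the global Milnor fiber with the wedge of the local ones), one sees that the only eigenvalue that the singular locus can contribute on $H^{n-1}(F)$ is $\alpha=(-1)^n$. When $dn$ is odd, $\alpha=-1$ is not a $d$-th root of unity, so Proposition~\ref{prpGlobalDiv} forces the corresponding eigenspace to vanish, giving $\delta=0$.

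When $dn$ is even, I would compute the multiplicity via the Griffiths-Steenbrink-Saito type formula identifying $\dim H^{n-1}(F)_\alpha$ with graded pieces of the Jacobian ring $R(f)=S/J_f$. For nodes the contribution is concentrated in a single Hodge level, since each local Milnor algebra is one-dimensional; a degree count using the standard shifts (one factor of $d$ per step of the Hodge filtration, corrected by $-(n+1)$ for the affine shift) places the relevant graded piece of $R(f)$ in degree $\tfrac{nd}{2}-n-1$.

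Finally, I would convert this Jacobian-ring statement into a statement about $I(\Sigma)$. The Koszul-type sequence linking the partial derivatives of $f$, the ideal $I(\Sigma)$ of the singular locus, and the product of local Milnor algebras identifies the relevant graded piece of $R(f)$ with the defect of $I(\Sigma)$ in the same degree; this is possible precisely because nodes are hypersurface singularities of embedding dimension $n$, so the Jacobian ideal coincides with the maximal ideal locally at each singular point. The main obstacle will be this last identification: tracking the degree shifts coming from the Hodge filtration on one side and from the Koszul complex on the other, and verifying exactness of the comparison sequence in the range $k=\tfrac{nd}{2}-n-1$.
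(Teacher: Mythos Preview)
The paper does not prove this statement at all: Theorem~\ref{thmAlex} is quoted from Dimca's book \cite[Theorem 6.4.5]{Dim} and is used as a black box. So there is no ``paper's own proof'' to compare against.

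Your outline is a reasonable sketch of the argument one finds in that reference: divisibility of $\Delta_X$ by the Alexander polynomial of a generic hyperplane section restricts the possible eigenvalues to $d$-th roots of unity, the local monodromy at a node contributes only the eigenvalue $(-1)^n$, and the multiplicity is read off via the Hodge-theoretic description of the monodromy eigenspaces in terms of graded pieces of the Jacobian ring, which for nodal hypersurfaces translates into the defect of $I(\Sigma)$ in the stated degree. Your handling of the parity case is correct (when $dn$ is odd both $n$ and $d$ are odd, so $(-1)^n=-1$ is not a $d$-th root of unity). What you have written is, however, an outline rather than a proof: the precise identifications you flag at the end---matching the degree shifts from the Hodge filtration with those from the Koszul resolution of $I(\Sigma)$, and the exactness needed to pass from $R(f)$ to the defect---are exactly the technical core of Dimca's argument and would need to be carried out in full. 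For the purposes of this paper the correct move is simply to cite the result, as the author does.
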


\section{Deformations of nodal surfaces}
In \cite{BurnsWahl} and \cite{ZhaoThesis} the deformation theory of a degree $d$ nodal surface $X$ in $\Ps^3$ is studied. In both texts the authors consider deformations of the minimal resolution of singularities of $X$ (as an abstract variety) preserving the $-2$ curves. Moreover, Zhao studies deformations of $X$ as $V$-manifold. 

It turns out that for $d\geq 5$ the deformation space is unobstructed if and only if the ideal of the nodes of $X$ does not have defect in degree $d$.

A different approach to study deformation of hypersurfaces with isolated singularities is due to Greuel and several collaborators including Shustin, and Shustin's students. The first paper taking this approach seems to be \cite{GrK}. Although most of their results are for plane curves only, some of their results hold for arbitrary hypersurfaces. In this case they fix an ambient variety $Y$ and a divisor $\cL$ on $Y$. They consider two equivalence relations among singularities, namely equianalytic and equisingular. Then for a choice of one of the equivalence relations and fixed types $T_1,\dots T_s$ they consider the locus $V_{|\cL|}(T_1,\dots,T_s)$ of hypersurfaces in $|\cL|$ with singularities of type $T_1,\dots,T_s$.  

Consider now the case of  $s$ nodes. Then the two equivalence relations coincide and  we denote with $V_{|\cL|}(s A_1)$ the subspace of $|\cL|$ of hypersurfaces with $s$ nodes. We expect that this locus has codimension $s$ inside $\cL$. In e.g., \cite{GouGou} it is shown that the codimension of the tangent space at $X\in V_{|\cL|}(sA_1)$ equals $s-\delta$ where $\delta$ is the defect of the linear system of polynomials in $|\cL|$ vanishing at the $s$ nodes of $X$. I.e., the difference between expected and actual dimension of the tangent space is the same in both theories.

The main differences between the two approaches is the following: in the first approach one finds that the tangent space to the deformation space can be identified with $(I/J)_d$, where $I$ is the ideal of the nodes and $J$ is the Jacobian ideal of $X$, and in the second approach the tangent space can be identified with $I_d/\C F$, where $F$ is the defining polynomial of $X$. 
If $d\geq 3$ then $\dim J_d=16$ (see \cite[Corollary 4.3]{DimNodalHS} or \cite[Proposition 3.3.8]{ZhaoThesis}). In this case  one has a natural identification between $J_d/F$ and the tangent space to $\Aut(\Ps^3)$. 

In the cases $d=2$ and $d=4$ one has to be a bit more careful. In the case $d=2$ a surface with a node has infinite automorphisms group and in the case $d=4$ there is a difference between embedded deformations (i.e., deformations as a surface in  $\Ps^3$) and abstract deformations. 

\section{Proof of Theorems}

The following proposition is the same as \cite[Corollary 1.3]{DimNodalHS}, but our proof is different from Dimca's proof.

\begin{proposition}Let $X$ be a nodal surface of degree $d$. Let $\Sigma$ be the locus of points where $X$ has a node and let $I=I(\Sigma)$. Then $h_I(k)=\#\Sigma$ for $k> \frac{3}{2}d-4$. 

In particular, if $h_I(d)<\#\Sigma$ then $d\geq 8$.
\end{proposition}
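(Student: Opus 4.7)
The plan is to argue by contradiction, combining monotonicity of the defect with Propositions~\ref{prpBnd} and \ref{prpGlobalDiv} and Theorem~\ref{thmAlex}. Since $\Sigma$ is $0$-dimensional, $h_I(k)$ is non-decreasing in $k$ (as in the proof of Proposition~\ref{prpBnd}), so the defect $\delta_k(I)=\#\Sigma-h_I(k)$ is non-increasing, and it suffices to verify $\delta_{k_0}(I)=0$ at the smallest integer $k_0>\tfrac{3d}{2}-4$, i.e.\ $k_0=\tfrac{3d}{2}-3$ when $d$ is even and $k_0=\tfrac{3d-7}{2}$ when $d$ is odd.

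Suppose $\delta_{k_0}(I)\geq 1$. Applying Proposition~\ref{prpBnd} with $n=3$, $k=k_0+3$, and a sufficiently generic morphism $\varphi\colon\Ps^3\to\Ps^3$ of degree $t$ (given by four polynomials of degree $t$ without common zero), one obtains
\[
\delta_{tk_0+3t-4}\bigl(I(\varphi^{-1}(\Sigma))\bigr)\geq \binom{t+3}{3}-3.
\]
For $d$ even and the above choice of $k_0$, this degree $tk_0+3t-4$ equals $\tfrac{3td}{2}-4$, the critical degree appearing in Theorem~\ref{thmAlex} for the pulled-back hypersurface $Y:=\varphi^{-1}(X)$ of degree $td$. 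For generic $\varphi$, $Y$ has only isolated singularities, and by further genericity these can be arranged to be ordinary nodes (the singularities over $\Sigma$ are nodes because $\varphi$ is unramified there, while the additional singularities lying on the branch locus of $\varphi$ are generically nodes). Theorem~\ref{thmAlex} and Proposition~\ref{prpGlobalDiv} then give
\[
\delta_{\frac{3td}{2}-4}\bigl(I(\mathrm{Sing}(Y))\bigr)\leq (td)^2-3td+3.
\]
Since $\varphi^{-1}(\Sigma)\subseteq \mathrm{Sing}(Y)$ implies $\delta_k(I(\mathrm{Sing}(Y)))\geq \delta_k(I(\varphi^{-1}(\Sigma)))$ for every $k$, combining the two displayed bounds yields $\binom{t+3}{3}-3\leq (td)^2-3td+3$. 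Taking $t$ sufficiently large (of order $d^2$) makes the left-hand side (cubic in $t$) exceed the right-hand side (quadratic in $t$), contradicting the initial assumption.

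For $d$ odd the same scheme applies with $t$ even (so that $td$ is even and Theorem~\ref{thmAlex} is applicable), although $tk_0+3t-4$ then sits a distance $\tfrac{t}{2}$ to the left of the critical degree $\tfrac{3td}{2}-4$, and one must either iterate the construction or, having first established the even-$d$ statement, apply it to the pull-back of degree $2d$. The hardest step is bookkeeping the discrepancy between $I(\varphi^{-1}(\Sigma))$ and $I(\mathrm{Sing}(Y))$, i.e.\ controlling the additional singularities of $Y$ introduced along the branch locus of $\varphi$. This is precisely the place where the ideas of \cite[Proof of Theorem~4.1]{KloNod} enter: one bounds the Hilbert function of an ideal that simultaneously has defect at a certain degree (supplied by the Alexander polynomial of $Y$) and has finite base locus at a lower degree, namely $d-1$, coming from the Jacobian ideal $J\subset I$ being generated in degree $d-1$ and cutting out $\Sigma$ set-theoretically. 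Combined with the Macaulay--Gotzmann Theorem~\ref{thmHilb} and the monotonicity of $\delta_k$, this closes the argument.
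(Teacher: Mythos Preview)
Your overall strategy is the paper's: assume a defect survives past $\tfrac{3}{2}d-4$, pull back by a degree-$t$ self-map of $\Ps^3$, use Proposition~\ref{prpBnd} to force a cubic-in-$t$ lower bound on the defect at the Alexander degree of the pull-back, and contradict the quadratic upper bound from Proposition~\ref{prpGlobalDiv} via Theorem~\ref{thmAlex}. The even/odd bookkeeping you flag is also the right concern; the paper simply takes the base-change degree even from the outset.

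Where you diverge from the paper is in your treatment of $\mathrm{Sing}(Y)$ versus $\varphi^{-1}(\Sigma)$. The paper observes (and you miss) that for a \emph{general} $\varphi$ one has $\mathrm{Sing}(\varphi^{-1}(X))=\varphi^{-1}(\Sigma)$ on the nose, so $\tilde I$ is literally the pulled-back ideal and Theorem~\ref{thmAlex} applies directly. This is a straightforward incidence-variety dimension count: for fixed $p\in\Ps^3$ the condition ``$\varphi(p)\notin\Sigma$ and $p\in\mathrm{Sing}(\varphi^{-1}(X))$'' imposes $g(\varphi(p))=0$ together with three independent linear conditions on the partials $(\partial_i f_j(p))$, hence codimension $4$ on the parameter space of $\varphi$; letting $p$ vary over $\Ps^3$ costs only $3$ dimensions, so the bad locus of $\varphi$'s is a proper closed subset. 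Once you know this, no ``additional singularities along the branch locus'' exist, and your assertion that such extra singularities ``are generically nodes'' is both unjustified and unnecessary.

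Consequently, your final paragraph is a red herring. You do not need to compare $I(\varphi^{-1}(\Sigma))$ with $I(\mathrm{Sing}(Y))$, and neither Theorem~\ref{thmHilb} (Macaulay--Gotzmann) nor the Jacobian-ideal argument enters this proposition at all; those tools belong to the \emph{next} proposition on the $4d-4$ threshold. (Incidentally, your defect inequality $\delta_k(I(\mathrm{Sing} Y))\geq\delta_k(I(\varphi^{-1}\Sigma))$ is correct, so your detour is not fatal for even $d$ provided $Y$ is nodal --- but that last proviso is exactly what you have not established, and it is what the genericity observation gives for free.)
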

\begin{proof} 
Let $s$ be an even integer. Take a general degree-$s$ base change $\varphi:\Ps^3\to \Ps^3$. Let $\tilde{I}$ be the ideal of the nodes of $\tilde{X}:=\varphi^{-1}(X)$. Since $\varphi$ is general we may assume that $\tilde{I}$ is the pull back of $I$ under $\varphi$.

If $h_I(k)<\# \Sigma$ for some $k> \frac{3}{2}d-4$ then $\delta_{\tilde{I}}(\frac{3}{2}sd-4)\geq \binom{s+3}{3}-3$ by Proposition~\ref{prpBnd}.

By Theorem~\ref{thmAlex} we have that $\delta_{\tilde{I}}(\frac{3}{2}sd-4)$ is the exponent of $(t+1)$ in the Alexander polynomial of $\varphi^{-1}(X)$ and by Proposition~\ref{prpGlobalDiv} this exponent is at most $(sd)^2-3sd+3$. For $s$ sufficiently large this is smaller than $\binom{s+3}{3}-3$ and we have a contradiction. Hence $h_I(k)=\#\Sigma$ for $k>\frac{3}{2}d-4$.
%
%
%
\end{proof}

\begin{remark} This proposition is the same as \cite[Corollary 1.3]{DimNodalHS}. Dimca bounds the degree of the syzygies by using spectral sequences and an argument comparing the Hodge filtration and the pole order filtration on the cohomology of the complement of the affine Milnor fiber associated with the defining polynomial of $X$.
\end{remark}

\begin{proposition} Let $X$ be a nodal surface of degree $d$. Suppose $d\geq 8$. Let $I$ be the ideal of the nodes of $X$. If $X$ has at most $4d-5$ nodes then $h_I(d)=\#\Sigma$. 
If $X$ has $4d-4$ nodes and $h_I(d)<\#\Sigma$ then $I$ is a complete intersection ideal of degree $(1,4,d-1)$.
\end{proposition}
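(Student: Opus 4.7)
Proof plan. Assume $h_I(d) < \#\Sigma$ and set $N = \#\Sigma \leq 4d - 4$, so $h_I(d) \leq N - 1 \leq 4d - 5$. Since $h_I$ is non-decreasing on the $0$-dimensional scheme $\Sigma$, $h_I(k) \leq 4d - 5$ for all $k \leq d$. The plan is to extract enough information about the minimal free resolution of $S/I$ to force $I$ to be the Koszul complex of a complete intersection.

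First, two preliminary observations.  If $h_I(k) \leq k$ for some $k \leq d$, Theorem~\ref{thmHilb} combined with monotonicity forces $h_I$ to be constant equal to $N$ from degree $k$ onwards, contradicting $h_I(d) < N$; hence $h_I(k) \geq k + 1$ for every $k \leq d$.  By the preceding proposition, $h_I(k) = N$ already for $k \geq \lceil \tfrac{3}{2} d\rceil - 3$, so the defect formula $\delta_k(I) = -\sum_{j \geq k+4} B_j(I) \binom{j-k-1}{3}$ involves only $B_j(I)$'s with $j$ in the narrow window roughly between $d+4$ and $\lceil \tfrac{3}{2}d\rceil + \text{const}$.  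In particular the resolution has a sharp regularity control.

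Next, apply Proposition~\ref{prpBnd} with $k = d + 3$ and $t = 2$, obtaining $\delta_{2d+2}(I_2) \geq 7$ for the pulled-back ideal $I_2$. As in the proof of Proposition~\ref{prpBnd}, the pullback of a minimal free resolution of $S/I$ is a free resolution of $S/I_2$, so $B_{2j}(I_2) = B_j(I)$ and $B_j(I_2) = 0$ otherwise; hence $\delta_{2d+2}(I_2) = -\sum_{m \geq d+3} B_m(I) \binom{2m - 2d - 3}{3}$.  Combining this lower bound with the formula $h_I(k) = \sum_{j \leq k} B_j(I) \binom{3+k-j}{3}$, the known values $h_I(k) = N$ for $k \geq \lceil \tfrac{3}{2}d \rceil - 3$, and the bound $N \leq 4d - 4$, yields a small, essentially determined linear system for the nonzero $B_j(I)$'s.

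Solving this system forces the $B_j(I)$'s to match those of a Koszul complex for a complete intersection of type $(a_1, a_2, a_3)$ with $a_1 + a_2 + a_3 = d + 4$ (the condition that the defect occurs exactly at degree $d$) and $\prod a_i = N$; matching $\beta_2$ and $\beta_3$ to Koszul then implies that the three generators form a regular sequence, so $I$ is a complete intersection.  For positive integers $a_1 \le a_2 \le a_3$ with $\prod a_i \leq 4d - 4$, direct enumeration gives only the triples $(1,1,d+2)$, $(1,2,d+1)$, $(1,3,d)$, $(1,4,d-1)$, with products $d+2, 2d+2, 3d, 4d-4$ respectively.  Only $(1, 4, d-1)$ achieves $\prod a_i = 4d - 4$, so $N = 4d - 4$ and $I$ is a complete intersection of type $(1, 4, d-1)$.

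The main obstacle is the middle step: translating the amplified defect $\delta_{2d+2}(I_2) \geq 7$ (where the amplification factor $\binom{3+2}{3} - 3 = 7$ comes from Proposition~\ref{prpBnd}) into precise Betti-number constraints.  The regularity bound from the preceding proposition restricts the nonzero $B_j(I)$'s to at most $\lceil d/2 \rceil$ values of $j$, which keeps the resulting linear system small and tractable, but the bookkeeping — in particular ruling out Hilbert functions with the same shape as the Koszul one but non-Koszul Betti table — requires careful attention to which $B_j$'s can be nonzero at each step.
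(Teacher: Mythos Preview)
Your plan has genuine gaps at its two central steps, and it misses the key geometric input that makes the paper's argument work.

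\textbf{First gap: from the amplified defect to the $B_j$'s.} You apply Proposition~\ref{prpBnd} with $t=2$ to get $\delta_{2d+2}(I_2)\geq 7$, and you know $B_j(I)=0$ for $j$ outside a window of length roughly $d/2$. That is still a single linear inequality in about $d/2$ unknowns, together with the equalities coming from $h_I(k)=N$ for large $k$ and the bound $N\leq 4d-4$. You do not explain why this system has only the Koszul solutions; in fact it has many solutions even at the level of Hilbert functions, and you acknowledge this as ``the main obstacle'' without resolving it.

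\textbf{Second gap: from the $B_j$'s to a complete intersection.} The $B_j$'s are alternating sums of Betti numbers and determine only the Hilbert function, not the minimal resolution. Saying ``matching $\beta_2$ and $\beta_3$ to Koszul then implies a regular sequence'' presupposes you know the actual $\beta_{i,j}$, which you do not. Many non-CI ideals share a Hilbert function with a CI.

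\textbf{Missing geometric input.} Nowhere do you use that $I_{d-1}$ contains the partial derivatives of the defining polynomial of $X$, hence has zero-dimensional base locus. This is exactly what rules out the types $(1,1,d+2)$, $(1,2,d+1)$, $(1,3,d)$ in your list (for each of these, $I_{d-1}$ would cut out a curve), and more importantly it is the hypothesis that drives the strict inequalities in Theorem~\ref{thmHilb}.

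\textbf{What the paper does instead.} The paper slices by a general hyperplane $\ell$: from the exact sequence one has $h_I(k)=\sum_{j\leq k} h_{I_H}(j)$ with $I_H=(I,\ell)$. Assuming $h_I(d)<\#\Sigma$ forces $h_{I_H}(d+1)\neq 0$; one then enlarges $I_H$ to an Artinian Gorenstein ideal $J$ with socle in degree $d+1$. Because $(I_H)_{d-1}$ is base-point-free (from the Jacobian), the strict form of Theorem~\ref{thmHilb} gives $h_J(d)\geq 2$, $h_J(d-1)\geq 3$, $h_J(d-2)\geq 4$, and Gorenstein symmetry propagates these bounds. Summing yields $\#\Sigma\geq\sum_k h_J(k)\geq 4d-4$, proving the first assertion; in the equality case the Hilbert function of $I$ is pinned down exactly, from which one reads off generators in degrees $1$, $4$, $d-1$ and checks (again using the Jacobian) that they form a regular sequence. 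This approach avoids Betti-number bookkeeping entirely and uses Macaulay--Gotzmann in the Artinian setting, where it is much sharper.
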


\begin{proof}
Let $\ell$ be a general linear form. Let $I_H=(I,\ell)$. Consider the exact sequence
\[ 0 \to (S/I)_k \stackrel{\ell}{\to} (S/I)_{k+1} \to (S/I_H)_{k+1} \to 0.\]
Denote with $I_H=(I,\ell)$. From this exact sequence it follows that
\begin{eqnarray}\label{eqnHilb} h_I(k)=\sum_{j=0}^k h_{I_H}(k).\end{eqnarray}

Suppose now that $h_I(d)<\#\Sigma$.
If for some $k\geq 0$ we have that $h_{I_H}(k)=0$ then for all $m\geq k$ we have $h_{I_H}(m)=0$. In particular, $h_I(m)$ is constant for $m\geq k-1$. Since $h_I(d)<\#\Sigma$ and $h_I(k)=\#\Sigma$ for $k$ large, we find that $h_{I_H}(d+1)\neq 0$.

We construct now the following ideal $J$. Take $J_{d+1}$ such that $(I_H)_{d+1}\subset J_{d+1}$ and $h_J(d+1)=1$. For $k< d+1$ set
\[ J_k=\{f\in S_k \colon f S_{d+1-k} \subset J_{d+1}\} \]
and set $J_k=S_k$ for $k>d+1$. Now set $J=\oplus_k J_k$. Then $J$ contains $I_H$. Moreover, $J$ is closed under addition and under multiplication by $-1$. To show that $J$ is an ideal we have to show that for every $f\in J_k, g\in S_m$ the product $fg$ is contained  in $ J_{k+m}$. If $k+m\geq d+2$ then $J_{k+m}=S_{k+m}$ and there is nothing to prove. If $k+m\leq d+1$ then  for every $h\in S_{d+1-k-m}$ we have $(fg)h=f(gh)\in fS_{d+1-k} \subset J_{d+1}$. Hence $fg\in J$ and  $J$ is an ideal.

We constructed $J$ in such a way that for any integer $k$ the pairing $(S/J)_k \times (S/J)_{d+1-k} \to (S/J)_{d+1}\cong \C$ is perfect, i.e., $S/J$ is Artinian Gorenstein with socle degree $d-1$. In particular,  $h_J(k)=h_J(d+1-k)$.

The vector space $I_{d-1}$ contains all partial derivatives of the defining polynomial of $X$. In particular, it has a finite base locus and therefore $(I_H)_{d-1}$ has empty base locus. Hence also $J_{d-1}$ has empty base locus. Since $d-2>4$ we can apply Theorem~\ref{thmHilb} to obtain $h_J(d)\geq 2, h_J(d-1)\geq 3$ and $h_J(d-2)\geq 4$. 
Applying Theorem~\ref{thmHilb} again we find that $h_J(k)\geq 4$ for $k\geq 4$. Using that $S/J$ is Aritinian Gorenstein we obtain that $h_J(3)=h_J(d-2)\geq 4$. Similarly we find $h_J(k)\geq k+1$ for $k=0,1,2$.
 All in all we find
\[ \#\Sigma\geq h_I(d+1)=\sum_{k=0}^{d+1} h_{I_H}(k)\geq \sum_{k=0}^{d+1} h_J(k) \geq 4d-4.\]
Hence if $\#\Sigma<4d-4$ then $h_I(d)=\#\Sigma$.

Consider now the extreme case, i.e.,  $h_I(d)<\#\Sigma$ and $\#\Sigma=4(d-1)$. Then from the above discussion it follows that $I_H=J$ and we have that 
\[ h_{I_H}(k) = \left\{ \begin{array}{cl}
k+1 &\mbox{for } 0\leq k \leq 2 \\
4 & \mbox{for } 3\leq k \leq d-2  \\
d+2-k & \mbox{for } d-1\leq k \leq d+1\\ \end{array} \right.\]
Using (\ref{eqnHilb}) we find that $h_I(k)=\frac{1}{2}(k+1)(k+2)$ for $k\in \{0,1,2\}$, that $h_I(k)=4k-2$ for $3\leq k \leq d-2$ and $h_I(d-1)=4(d-1)-2-1$.
Hence $h_S(1)-h_I(1)=1$ and therefore $I$ has a generator $f_1$ in degree $1$. The ideal generated by $f_1$ and the ideal $I$ have the same Hilbert function up to degree 3, but their Hilbert functions differ in degree $4$, so $I$ has another generator $f_2$ in degree $4$. The ideal $(f_1,f_2)$ is the ideal of a quartic plane curve. Its Hilbert function equals $4k-2$ for $k\geq 4$. Hence $I_k=(f_1,f_2)_k$ for $k\leq d-2$. However, for degree $k-1$ we find that $h_I(k-1)$ is one less than   the Hilbert function of $(f_1,f_2)$. Hence there is a third generator for $I$ in degree $d-1$. As remarked above we have that $I_{d-1}$ has finite base locus. Hence $f_1,f_2,f_3$ define a complete intersection, containing $\Sigma$. The length of this complete intersection equals $4(d-1)$. Since $\Sigma$ has the same length we find that $\Sigma$ is a complete intersection of multidegree $(1,4,d-1)$.
\end{proof}
\begin{remark}
In the case that $h_I(d)<\#\Sigma$ and $\#\Sigma=4(d-1)$ we have that $I=(g_1,g_2,g_3)$, with $\deg(g_1)=1,\deg(g_2)=4,\deg(g_3)=d-1$. Since $I$ is a complete intersection ideal we have that $I^2$ is generated by products of the generators of $I$. From $f\in I^2$ and $\deg(g_3^2)>\deg(f)$ it now follows that $f\in(g_1,g_2)$, i.e., the quartic curve containing all the nodes is contained in $X$.
\end{remark}

\begin{proof}[Proof of the Theorems]

Let $X\subset \Ps^3$ be a nodal surface.
As argued in the previous section we have 
 that the tangent space to the deformation space of $X$ has the expected dimension if and only if the ideal of the nodes of $X$ does not have defect in degree $d$.
 By the previous two propositions the latter can only happen if $d\geq 8$ and $\#\Sigma\geq 4d-4$.
If $d\geq 8$ and $\#\Sigma=4d-4$ holds and the tangent space has too large dimension if and only if the ideal of the nodes is a complete intersection of degree $(1,4,d-1)$. 
\end{proof}

Finally, we would like to discuss whether the defect of $I$ in degree $d$ can be explained by singularities of the deformation space or is induced by deformation spaces with excessive dimension.

\begin{example}\label{exa}
Let $d\geq 8$ and let $4\leq a\leq \frac{1}{2}d$.
Suppose now we have a polynomial $f$ of degree $d$, such that the singular locus of $V(f)$ contains a complete intersection $\Sigma$ of multidegree $(1,a,d-1)$.
For the moment assume that the plane containing $\Sigma$ is given by $x_0=0$. Then we can find $f_2\in \C[x_1,x_2,x_3]_a,g\in \C[x_1,x_2,x_3]_{d-1}$  such that $I(\Sigma)=(x_0,f_2,g)$. 

Since $f$ has double points at $\Sigma$ we find that $f\in (x_0,f_2,g)^2$. From the fact that $(x_0,f_2,g)$ is a complete intersection ideal it follows that its square is generated by all products of two generators of $(x_0,f_2,g)$.
Each generator of degree at most $d$ is divisible by $x_0$ except for $f_2^2$. In particular, there exist polynomials $f_1\in \C[x_0,x_1,x_2,x_3]_{d-1}$ and $f_3\in \C[x_1,x_2,x_3]_{d-2a}$ such that
\[ f=x_0f_1+f_2^2 f_3.\]
This decomposition is unique, except for the fact that we may multiply $f_3$ by a non-zero constant and divide $f_2$ by the square of the same constant. Conversely, every $f$ of the above form defines a surface singular at $x_0=f_1=f_2=0$.
Hence the locus $L_0\subset S_d$ of polynomials $f$ such that $V(f)_{\sing}$ contains a complete intersection $\Sigma$ of multidegree $(1,a,d-1)$ and $\Sigma\subset V(x_0)$ has dimension
\[ \binom{d-1+3}{3} +\binom{a+2}{2}+\binom{d-2a+2}{2}-1.\]
If we drop the condition $\Sigma\subset V(x_0)$ we find a locus $L$  of dimension  
\[ \binom{d-1+3}{3} +\binom{a+2}{2}+\binom{d-2a+2}{2}+2.\]
Its codimension in $S_d$ equals
\[\frac{1}{2} (-5a^2+3a+4da -6) .\]
For small $a$ this codimension is larger than the number of nodes. Hence  for  small $a$ 
we have that $L$ is a proper closed subset of some component of the deformation space. The smallest $a$ possible is $a=4$. In that case we have by the results of this section that $h_I(d)=4(d-1)$. In particular, for $a=4$ we have  $L$ is precisely the singular locus of the space of degree $d$ surfaces with $4(d-1)$ nodes.

For 
\[  \frac{1}{10} 2d+5+\sqrt{4d^2+20d-95}< a < \frac{1}{2}d\]
we have that the codimension of $L$ in $S_d$ is less than $a(d-1)$.  However, the codimension of $L$ in $S_d$ is strictly larger than $h_I(d)$. So either $L$ is a component of the deformation space, and this component is nonreduced or $L$ is a proper closed subset in a larger component.

Finally, if $d$ is even and $a=\frac{1}{2}d$ then the codimension of $L$ in $S_d$  equals $h_I(d)$. In particular, $L$ is an irreducible component of the deformation space and this component is smooth and \emph{not} of the expected dimension.

We have two geometric-topological characterisations for the case $a=\frac{1}{2}d$. 

If $a<1/2d$ then we have that $\rank \CH^1(X)>1$, (i.e., the hyperplane section and the degree $a$ curve $f_1=f_3=0$ are linearly independent in $\CH^1(X)$), whereas for $a=\frac{1}{2}d$ we have $\rank \CH^1(X)=1$ for very general $X$. 

A second characterisation is given by the Alexander polynomial, this polynomial is a constant polynomial in the case $a<\frac{1}{2}d$ and equals $(t+1)$ in the case $a=\frac{1}{2}d$.
\end{example}

\bibliographystyle{plain}
\bibliography{remke2}

\end{document}